\theoremstyle{plain}
 \newtheorem{thm}{Theorem}[section]
 \newtheorem{lem}[thm]{Lemma}
 \newtheorem{prop}[thm]{Proposition}
 \newtheorem{cor}[thm]{Corollary}
 \newtheorem{ques}[thm]{Question}
\theoremstyle{definition}
\theoremstyle{remark}
 \newtheorem{rem}[thm]{Remark}
\newcommand{\vol}{\mathrm{vol}}
\newcommand{\length}{\mathrm{length}}
\begin{document}
\bibliographystyle{siam}

\title[Quotient topology on the set of commensurability classes]{Quotient topology on the set of commensurability classes of hyperbolic 3-manifolds}
\author{Ken'ichi YOSHIDA}
\address{Department of Mathematics, Saitama University, 255 Shimo-Okubo, Sakura-ku, Saitama-shi, Saitama 338-8570, Japan}
\email{kyoshida@mail.saitama-u.ac.jp}
\subjclass[2010]{57M50, 54B15}
\keywords{commensurability classes; quotient spaces}
\date{}

\begin{abstract}
We investigate relation between Dehn fillings and commensurability of hyperbolic 3-manifolds. 
The set consisting of the commensurability classes of hyperbolic 3-manifolds 
admits the quotient topology induced by the geometric topology. 
We show that this quotient space satisfies some separation axioms. 
Roughly speaking, this means that commensurability classes are sparsely distributed 
in the space consisting of the hyperbolic 3-manifolds. 
\end{abstract}

\maketitle

\section{Introduction}
\label{section:intro}

Let $\mathcal{H}$ denote 
the set consisting of the finite volume oriented hyperbolic 3-manifolds. 
For simplicity, we will often omit ``finite volume oriented''. 
According to J\o rgensen and Thurston, 
the set $\mathcal{H}$ admits a topology 
induced by the Gromov-Hausdorff convergence. 
This topology is called the \textit{geometric topology}. 
The space $\mathcal{H}$ is homeomorphic to 
the ordinal $\omega^\omega$ with the order topology 
by the following manner. 
Let $M$ be a hyperbolic 3-manifold with $k$ cusps. 
Fix meridians and longitudes on the cusps of $M$. 
For $r=(r_{1}, \dots , r_{k}) \in (\mathbb{Q} \cup \{\infty\} \cup \{\emptyset\})^{k}$, 
let $M(r)$ be a 3-manifold obtained by gluing solid tori to $M$ along the cusps, 
so that the boundary of compressing disk in the $i$-th solid tori has the slope $r_{i}$. 
If $r_{i} = \emptyset$, the $i$-th cusp is not filled. 
Then we simply say that $M(r)$ is the \textit{Dehn filling} of $M$ along the slope $r$. 
Moreover, it holds that $\vol (M(r)) < \vol (M)$ unless $r$ is empty. 
The volume function on $\mathcal{H}$ is continuous and proper. 
A closed hyperbolic 3-manifold is an isolated point in $\mathcal{H}$. 
On the other hand, a cusped hyperbolic 3-manifold is a limit point in $\mathcal{H}$. 
Its neighborhood contains Dehn fillings along sufficiently long slopes. 
For long slopes $r$, 
the cores in the filled solid tori of $M(r)$ are closed geodesics of short lengths. 
We refer the reader to 
\cite{benedetti1992lectures, gromov1981hyperbolic, ratcliffe2006foundations} 
for more details.

Two oriented manifolds are \textit{commensurable} 
if they have a common finite covering 
such that the covering maps preserves the orientations. 
The fundamental group $\pi_{1}(M)$ of $M \in \mathcal{H}$ is residually finite, 
since it is linear. 
Hence there are finite coverings of $M$ of arbitrarily large degree. 
The commensurability is an equivalent relation. 
Let $\mathcal{C}$ denote 
the set consisting of the commensurability classes in $\mathcal{H}$. 
The geometric topology on $\mathcal{H}$ induces the quotient topology on $\mathcal{C}$. 
Of course, a closed manifold and a cusped manifold are not commensurable. 
Independently of closedness, there are two kinds of elements in $\mathcal{C}$: 
arithmetic and non-arithmetic. 
We refer the reader to \cite{maclachlan2013arithmetic} for details. 
Theorem~\ref{thm:arithvol} shows that 
the arithmetic hyperbolic 3-manifolds are comparatively rare. 
It follows from a volume formula for arithmetic hyperbolic 3-manifolds. 

\begin{thm}[\cite{maclachlan2013arithmetic} Theorem 11.2.1]
\label{thm:arithvol}
For any $v>0$, there are at most finitely many arithmetic hyperbolic 3-manifolds 
with volume $<v$. 
\end{thm}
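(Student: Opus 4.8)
The plan is to split the count into two finiteness statements: \textbf{(I)} only finitely many commensurability classes of arithmetic Kleinian groups contain a lattice of covolume $<v$, and \textbf{(II)} inside each such class only finitely many manifolds have volume $<v$. These combine immediately, since any arithmetic $M$ with $\vol(M)<v$ lies in a class whose minimal covolume is $\le\vol(M)<v$, hence one of the finitely many classes from (I), and is then one of the finitely many manifolds produced in (II).

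For (I) I would use the arithmetic classification together with the volume formula alluded to in the statement. Each arithmetic hyperbolic $3$-manifold $\mathbb{H}^3/\Gamma$ determines an invariant trace field $k$, a number field with exactly one complex place, and an invariant quaternion algebra $A$ over $k$ ramified at every real place; the commensurability class is determined by the pair $(k,A)$, and $\Gamma$ is commensurable with the image $P\rho(\mathcal{O}^1)$ of the norm-one units of a maximal order $\mathcal{O}\subset A$. Borel's volume formula evaluates
\[
\vol\bigl(\mathbb{H}^3/P\rho(\mathcal{O}^1)\bigr)=\frac{|d_k|^{3/2}\,\zeta_k(2)\prod_{\mathcal{P}}(N\mathcal{P}-1)}{(4\pi^2)^{[k:\mathbb{Q}]-1}},
\]
the product running over the finite ramified primes of $A$. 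Since $\zeta_k(2)>1$ and each factor $N\mathcal{P}-1\ge1$, a covolume bound yields $|d_k|^{3/2}<v\,(4\pi^2)^{[k:\mathbb{Q}]-1}$. A lower bound on the root discriminant $|d_k|^{1/[k:\mathbb{Q}]}$ that exceeds the threshold $(4\pi^2)^{2/3}$ in the limit of large degree --- supplied by Minkowski's bound or by the sharper analytic estimates of Odlyzko --- then forces $[k:\mathbb{Q}]$ to be bounded. For each of the finitely many admissible degrees the same inequality bounds $|d_k|$, so by the Hermite--Minkowski theorem there are finitely many fields $k$; for each $k$ it also bounds $\prod(N\mathcal{P}-1)$, hence the finite ramification set $\mathrm{Ram}_f(A)$, leaving finitely many algebras $A$ and so finitely many classes.

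For (II) I would invoke Borel's description of the maximal arithmetic groups: a commensurability class contains only finitely many conjugacy classes of maximal lattices, and each has covolume at least some $m>0$. Any manifold $\mathbb{H}^3/\Gamma$ in the class with $\vol<v$ has $\Gamma$ contained, up to conjugacy, in one of these maximal groups $\Gamma_0$, and then $[\Gamma_0:\Gamma]=\vol(\mathbb{H}^3/\Gamma)/\mathrm{covol}(\Gamma_0)<v/m$. Since a lattice in $\mathrm{PSL}_2(\mathbb{C})$ is finitely generated, it has only finitely many subgroups of index below any fixed bound; hence only finitely many such $\Gamma$, and \emph{a fortiori} finitely many manifolds, occur.

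The main obstacle is the degree bound in (I): everything hinges on the single factor $|d_k|^{3/2}$ overwhelming the denominator $(4\pi^2)^{[k:\mathbb{Q}]-1}$ as $[k:\mathbb{Q}]\to\infty$, which comes down to knowing that the root discriminant eventually grows past $(4\pi^2)^{2/3}$. This is a delicate comparison --- Minkowski's elementary bound pushes the root discriminant only towards $e^2\approx 7.4$, so depending on the precise archimedean constant one may be forced to call on the stronger analytic lower bounds for $|d_k|$; securing a root-discriminant estimate above the threshold is the real crux. Once the degree and discriminant are controlled, the remaining steps (finiteness of fields, of admissible ramification sets, and the subgroup count) are routine.
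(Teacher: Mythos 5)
First, note that the paper does not prove this statement at all: it is imported verbatim from Maclachlan--Reid (Theorem 11.2.1), with only the remark that it ``follows from a volume formula for arithmetic hyperbolic 3-manifolds.'' So your proposal has to be measured against the standard proof in the cited source, which goes back to Borel. Your architecture --- (I) finitely many commensurability classes of bounded minimal covolume, via the covolume formula and discriminant lower bounds, then (II) finitely many bounded-covolume lattices per class, via finiteness of maximal groups and of bounded-index subgroups of a finitely generated group --- is exactly that proof, and you correctly identify the analytic crux: Minkowski's root-discriminant bound tends only to $e^2\approx 7.39$, below the threshold $(4\pi^2)^{2/3}\approx 11.6$, so Odlyzko-type bounds are genuinely required.

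The one genuine gap is in (I), where you silently identify the minimal covolume of the class determined by $(k,A)$ with $\vol(\mathbb{H}^3/P\rho(\mathcal{O}^1))$. It is not: the maximal elements of the class are the normalizer groups $\Gamma_{S,\mathcal{O}}$, and $[\Gamma_{\mathcal{O}}:P\rho(\mathcal{O}^1)]$ is an elementary abelian $2$-group whose order involves $2^{|\mathrm{Ram}_f(A)|}$ and the $2$-part of a (ray) class group of $k$; this index is unbounded as $(k,A)$ varies, so the quantity you bound below can exceed the actual minimal covolume by an arbitrarily large factor. A lattice of covolume $<v$ in the class only yields an inequality of the shape
\[
\frac{|d_k|^{3/2}\,\zeta_k(2)\prod_{\mathcal{P}\in\mathrm{Ram}_f(A)}\tfrac{N\mathcal{P}-1}{2}}{(4\pi^2)^{[k:\mathbb{Q}]-1}\,[k_A:k]}<v,
\]
and to conclude one must dominate $[k_A:k]$ (essentially a class number) by a power of $|d_k|$ strictly smaller than $3/2$; this is where Brauer--Siegel combined with a regulator lower bound (Zimmert/Friedman) enters in Borel's and Chinburg--Friedman's treatments, and it is the step your sketch omits. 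The halved factors $(N\mathcal{P}-1)/2$, which can be as small as $1/2$, also need the observation that at most $[k:\mathbb{Q}]$ primes satisfy $N\mathcal{P}=2$. Once these points are supplied, (I) closes, and (II) is routine as you describe.
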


Consequently, 
for any cusped hyperbolic 3-manifold $M$, 
there are at most finitely many arithmetic Dehn fillings of $M$. 
Thus we may say that ``most of the hyperbolic 3-manifolds are non-arithmetic.''
Every non-arithmetic class has a minimal element 
in the category of hyperbolic 3-orbifolds 
(see also \cite{borel1981commensurability}). 

\begin{thm}[Margulis~\cite{margulis1991discrete}]
\label{thm:nonarithcomm}
For a non-arithmetic class $C \in \mathcal{C}$, 
there is an oriented hyperbolic 3-orbifold $M_{C}$ such that 
any $M \in C$ is a covering of $M_{C}$. 
\end{thm}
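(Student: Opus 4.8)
The plan is to realize $M_C$ as the quotient of $\mathbb{H}^3$ by the \emph{commensurator} of a representative group, and to invoke Margulis's arithmeticity criterion to guarantee this commensurator is discrete. First I would fix $M \in C$ and write $M = \mathbb{H}^3 / \Gamma$, where $\Gamma < G := \mathrm{Isom}^+(\mathbb{H}^3) \cong \mathrm{PSL}(2,\mathbb{C})$ is a torsion-free lattice. Recall the commensurator
\[
\mathrm{Comm}(\Gamma) = \{\, g \in G : [\Gamma : \Gamma \cap g\Gamma g^{-1}] < \infty \ \text{and}\ [g\Gamma g^{-1} : \Gamma \cap g\Gamma g^{-1}] < \infty \,\}.
\]
The elementary facts to record are that $\mathrm{Comm}(\Gamma)$ is a subgroup of $G$ containing $\Gamma$, and that it is a commensurability invariant: if $\Gamma'$ is commensurable to $\Gamma$ up to conjugacy in $G$, then $\mathrm{Comm}(\Gamma') = \mathrm{Comm}(\Gamma)$. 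This last point is a routine bookkeeping of indices of intersections of finite-index subgroups.

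The decisive step is Margulis's characterization (\cite{margulis1991discrete}): a lattice $\Gamma < G$ is arithmetic if and only if $\mathrm{Comm}(\Gamma)$ is dense in $G$; equivalently, $\Gamma$ is non-arithmetic precisely when $\mathrm{Comm}(\Gamma)$ is discrete. Since $C$ is non-arithmetic, $\Gamma$ is non-arithmetic, so $\mathrm{Comm}(\Gamma)$ is discrete. A discrete subgroup of $G$ containing the finite-covolume lattice $\Gamma$ is itself a lattice and contains $\Gamma$ with finite index. I would emphasize that $\mathrm{Comm}(\Gamma)$ need not be torsion-free, which is exactly why the quotient is in general an orbifold rather than a manifold.

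I then set $M_C = \mathbb{H}^3 / \mathrm{Comm}(\Gamma)$, a finite-volume oriented hyperbolic $3$-orbifold, oriented because $G$ consists of orientation-preserving isometries. For any $M' \in C$, written $M' = \mathbb{H}^3/\Gamma'$, commensurability places a conjugate of $\Gamma'$ inside $\mathrm{Comm}(\Gamma)$ as a finite-index subgroup, and the inclusion $\Gamma' \hookrightarrow \mathrm{Comm}(\Gamma)$ induces the desired orbifold covering $M' \to M_C$. Because $\mathrm{Comm}(\Gamma') = \mathrm{Comm}(\Gamma)$, the orbifold $M_C$ depends only on $C$ and not on the chosen representative, so it is the minimal element promised by the statement.

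The genuine content is carried entirely by Margulis's arithmeticity criterion, which I would cite rather than reprove; the surrounding argument is formal. The point demanding the most care is the passage between abstract commensurability of the groups $\Gamma, \Gamma'$ and commensurability up to conjugacy inside $G$: here Mostow rigidity is what lets me realize each abstract isomorphism of finite-index subgroups by conjugation in $G$, so that all representatives of $C$ sit inside a single copy of $\mathrm{Comm}(\Gamma)$. Tracking orientations throughout then ensures that $M_C$ is an oriented orbifold covered in an orientation-preserving way by every member of $C$.
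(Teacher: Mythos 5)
The paper does not prove this statement; it is quoted as a theorem of Margulis (with pointers to \cite{margulis1991discrete} and \cite{borel1981commensurability}), so there is no internal argument to diverge from. Your reconstruction is the standard and correct derivation --- take $M_{C}=\mathbb{H}^{3}/\mathrm{Comm}(\Gamma)$, use Margulis's dichotomy (commensurator dense iff arithmetic, otherwise containing $\Gamma$ with finite index), and use Mostow rigidity to promote abstract commensurability to commensurability up to conjugacy in $\mathrm{Isom}^{+}(\mathbb{H}^{3})$ --- and you correctly flag the two points that actually need care, namely the Mostow step and the fact that $M_{C}$ is only an orbifold.
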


H. Yoshida~\cite{yoshida2017commensurability} showed that 
every cusped hyperbolic 3-manifold has infinitely many Dehn fillings 
which are mutually incommensurable, 
using Theorems~\ref{thm:arithvol} and \ref{thm:nonarithcomm}. 
We investigate more precise relation between Dehn fillings and commensurability 
by considering the quotient topology of $\mathcal{C}$.

The Ehrenpreis conjecture, proved by Kahn and Markovic~\cite{kahn2015good}, 
states that any two closed Riemann surfaces of genus $\geq 2$ 
have finite coverings which are arbitrarily close with respect to the Teichm\"{u}ller metric. 
Francaviglia, Frigerio, and Martelli~\cite[Remark 6.8]{francaviglia2012stable} 
proposed to generalize this conjecture to questions in any dimension. 
Specifically, let $\mathcal{A}$ be a set of manifolds 
closed under commensurability, 
and suppose that $\mathcal{A}$ is equipped with a metric. 
Then one may ask whether two manifolds in $\mathcal{A}$ 
have finite coverings which are arbitrarily close. 
They called such a question an \textit{Ehrenpreis problem}. 

We will give an ad hoc metric on $\mathcal{H}$ in Section~\ref{section:metric}. 
At present we do not solve the Ehrenpreis problem for this metric. 
Nevertheless, Theorem~\ref{thm:main} supports the negative answer, 
and it answers \cite[Question 8.6]{ohtsuki2015problems}.

\begin{thm}
\label{thm:main}
The quotient space $\mathcal{C}$ is normal Hausdorff. 
\end{thm}

\begin{cor}
\label{cor:vol}
Suppose that $M_{0}, M_{1} \in \mathcal{H}$ are not commensurable. 
Then there is a continuous function $f \colon \mathcal{H} \to \mathbb{R}_{>0}$ 
satisfying the following properties: 
\begin{itemize}
\item If $M,N \in \mathcal{H}$ and $N$ is an $n$-sheeted covering of $M$,  
then $f(N) = n f(M)$. 
\item $\dfrac{f(M_{0})}{\vol(M_{0})} \neq \dfrac{f(M_{1})}{\vol(M_{1})}$. 
\end{itemize}
\end{cor}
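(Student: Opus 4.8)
The plan is to reduce the construction of $f$ to separating the two points $[M_{0}], [M_{1}] \in \mathcal{C}$ by a continuous function, which is exactly what the normality from Theorem~\ref{thm:main} provides. The key observation is that the required scaling property is equivalent to a commensurability invariance of the ratio $f/\vol$. Indeed, hyperbolic volume is multiplicative under coverings: if $N$ is an $n$-sheeted covering of $M$, then $\vol(N) = n\vol(M)$. Hence for any function $f$ satisfying $f(N) = nf(M)$ we would have
\[
\frac{f(N)}{\vol(N)} = \frac{n f(M)}{n \vol(M)} = \frac{f(M)}{\vol(M)},
\]
so $f/\vol$ is preserved under passing to finite covers in either direction, and is therefore constant on each commensurability class. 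I would accordingly build $f$ in the form $f = g \cdot \vol$, where $g \colon \mathcal{H} \to \mathbb{R}_{>0}$ is continuous and factors through the quotient map $q \colon \mathcal{H} \to \mathcal{C}$.

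To produce such a $g$, I would first use that $\mathcal{C}$ is Hausdorff (Theorem~\ref{thm:main}), so that the singletons $\{[M_{0}]\}$ and $\{[M_{1}]\}$ are closed; since $M_{0}$ and $M_{1}$ are not commensurable, these two closed sets are disjoint. Normality then lets me apply Urysohn's lemma to obtain a continuous function $\bar{g} \colon \mathcal{C} \to [1,2]$ with $\bar{g}([M_{0}]) = 1$ and $\bar{g}([M_{1}]) = 2$, where the interval $[1,2]$ is chosen so that $\bar{g}$ is strictly positive. Setting $g = \bar{g} \circ q$, the function $g$ is continuous by the universal property of the quotient topology, and it is constant on commensurability classes by construction.

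Finally I would verify the two asserted properties for $f = g \cdot \vol$. Continuity of $f$ follows since $\vol$ is continuous (as stated in the introduction) and $g$ is continuous. For the covering property, if $N$ is an $n$-sheeted cover of $M$, then $N$ and $M$ are commensurable, so $g(N) = g(M)$, whence
\[
f(N) = g(N)\vol(N) = g(M)\cdot n\vol(M) = n f(M).
\]
For the last property, $f(M_{i})/\vol(M_{i}) = g(M_{i}) = \bar{g}([M_{i}])$, so the two ratios equal $1$ and $2$ respectively and are distinct.

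I do not expect any genuine obstacle here: the entire content is the reduction to a point-separation problem on $\mathcal{C}$, after which Theorem~\ref{thm:main} does all the work through Urysohn's lemma. The only point requiring a moment's care is to keep $g$ bounded away from $0$ so that $f$ takes values in $\mathbb{R}_{>0}$ rather than $\mathbb{R}_{\geq 0}$, which is why I separate the points inside $[1,2]$ instead of $[0,1]$.
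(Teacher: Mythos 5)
Your proposal is correct and follows essentially the same route as the paper: apply Urysohn's lemma on the normal Hausdorff space $\mathcal{C}$ to separate the two classes, then multiply the resulting class function by $\vol$. The paper's proof is just a terser version of yours (it writes $f = (\bar{f}\circ\pi)\cdot\vol$ and leaves the verification of the two properties, including the positivity point you handle via the range $[1,2]$, to the reader).
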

\begin{proof}
Let $\pi \colon \mathcal{H} \to \mathcal{C}$ 
denote the natural projection. 
By Urysohn's lemma, 
there is a continuous function $\bar{f} \colon \mathcal{C} \to \mathbb{R}_{>0}$ 
such that $\bar{f}(\pi(M_{0})) \neq \bar{f}(\pi(M_{1}))$. 
It is sufficient to set $f = (\bar{f} \circ \pi) \cdot \vol$. 
\end{proof}

We ask whether commensurability and Dehn fillings characterize the volume function. 

\begin{ques}
\label{ques:vol}
Is there $f$ in Corollary~\ref{cor:vol} satisfying the following additional property? 
\begin{itemize}
\item If $M,N \in \mathcal{H}$ and $N$ is a Dehn filling of $M$, then $f(N) < f(M)$. 
\end{itemize}
\end{ques}
The values of $f$ for closed classes can be made smaller. 
Hence the assertion holds unless both $M_{0}$ and $M_{1}$ are cusped.

\begin{rem}
\label{rem:orb}
We may consider the space $\mathcal{H}_{\mathrm{orb}}$ 
consisting of the finite volume hyperbolic 3-orbifolds 
with respect to the geometric topology. 
Let $\mathcal{C}_{\mathrm{orb}}$ denote the quotient space of $\mathcal{H}_{\mathrm{orb}}$ 
up to commensurability. 
Although $\mathcal{C}_{\mathrm{orb}}$ and $\mathcal{C}$ are identified as sets 
by Selberg's lemma~\cite[\S 7.6]{ratcliffe2006foundations}, 
their quotient topologies are different. 
We will prove it in Proposition~\ref{prop:orb}. 
Nevertheless, we can show the separability of $\mathcal{C}_{\mathrm{orb}}$ 
in the same manner as for $\mathcal{C}$. 
\end{rem}

\section{A metric on the space $\mathcal{H}$}
\label{section:metric} 

We define an ad hoc metric on $\mathcal{H}$. 
We construct a metric graph whose vertex set is $\mathcal{H}$. 
Let $M,N \in \mathcal{H}$ and let $N$ be a Dehn filling of $M$. 
Then we connect $M$ and $N$ with an edge of length $\vol (M) - \vol (N)$. 
After connecting edges for every such manifolds, 
we obtain a metric graph. 
For $M_{0}, M_{1} \in \mathcal{H}$, 
there is $N \in \mathcal{H}$ 
such that $M_{0}$ and $M_{1}$ are Dehn fillings of $N$ 
by the Lickorish-Wallace theorem. 
Hence any two vertices of this graph are connected by two edges. 
In particular, this graph is connected. 
Let $d$ denote the path metric of this graph. 
Clearly $d$ is a pseudometric.

\begin{prop}
\label{prop:metric}
The pseudometric $d$ on $\mathcal{H}$ has the following desired properties: 
\begin{itemize}
\item 
$d$ is a metric. 
\item 
The induced topology on $\mathcal{H}$ is the geometric topology. 
\item 
The metric space $(\mathcal{H}, d)$ is complete. 
\item 
The volume function on $\mathcal{H}$ is uniformly continuous. 
\item 
If $M_{0},M_{1},M_{2} \in \mathcal{H}$, 
$M_{1}$ is a Dehn filling of $M_{0}$, 
and $M_{2}$ is a Dehn filling of $M_{1}$, 
then $d(M_{0}, M_{1}) < d(M_{0}, M_{2})$. 
\end{itemize}
\end{prop}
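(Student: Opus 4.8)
The plan is to reduce everything to one structural description of the path metric. Since every pair of vertices carries an edge of length one and all edge lengths are positive, a path that uses any such edge has total length $\ge 1$; hence $d(M,N)=\min\!\big(1,\,d_{\mathrm{DF}}(M,N)\big)$, where $d_{\mathrm{DF}}(M,N)$ is the infimum of $\sum_i |\vol(P_i)-\vol(P_{i+1})|$ over paths $M=P_0,\dots,P_k=N$ each of whose steps joins a Dehn-filling-comparable pair. The triangle inequality on $\mathbb{R}$ gives $d_{\mathrm{DF}}(M,N)\ge |\vol(M)-\vol(N)|$, and when $N$ is a Dehn filling of $M$ the direct edge makes this an equality, so $d(M,N)=\min\!\big(1,\vol(M)-\vol(N)\big)$. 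Two bullets then follow at once: if $d(M,N)<1$ then $|\vol(M)-\vol(N)|\le d_{\mathrm{DF}}(M,N)=d(M,N)$, so $\vol$ is $1$-Lipschitz below scale $1$ and hence uniformly continuous; and for a tower in which $M_1$ is a filling of $M_0$ and $M_2$ a filling of $M_1$, the composite is again a filling, so $d(M_0,M_i)=\min\!\big(1,\vol(M_0)-\vol(M_i)\big)$ with $\vol(M_0)-\vol(M_1)<\vol(M_0)-\vol(M_2)$, and $t\mapsto\min(1,t)$ being strictly increasing where it is below $1$ yields $d(M_0,M_1)<d(M_0,M_2)$.

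Next I would show that geometric convergence implies $d$-convergence. If $M_n\to M$ geometrically with $M$ cusped, then by J\o rgensen--Thurston the $M_n$ are eventually Dehn fillings of $M$, and by continuity of the volume their defects $\vol(M)-\vol(M_n)\to 0$, so the direct edge gives $d(M,M_n)\le \vol(M)-\vol(M_n)\to 0$ (for closed $M$ this is trivial, as $M$ is geometrically isolated). Completeness follows formally: a $d$-Cauchy sequence has Cauchy, hence bounded, volumes by the uniform continuity just established, so it lies in a sublevel set $\{\vol\le V\}$, which is $\tau_{\mathrm{geom}}$-compact since $\vol$ is proper; extracting a $\tau_{\mathrm{geom}}$-convergent subsequence and applying the implication above produces a $d$-convergent subsequence, and a Cauchy sequence with a convergent subsequence converges.

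The substance of the proposition, and the step I expect to be the main obstacle, is the converse: every sufficiently small $d$-ball about $M$ is contained in a geometric neighbourhood. Here I would first normalise a path realising $d(M,N)<\delta<1$ by collapsing each monotone (same-direction) stretch into a single Dehn-filling edge, which preserves length and yields an alternating zigzag $M=P_0,\dots,P_k=N$ whose local maxima (the \emph{peaks}) are cusped, since a closed manifold admits no Dehn filling and so cannot be a parent. As the total variation is $<\delta$, every peak has volume within $\delta$ of $\vol(M)$, so by the J\o rgensen--Thurston finiteness of cusped manifolds below a volume bound all peaks lie in a fixed finite set $\mathcal{W}$. Fixing a metric for $\tau_{\mathrm{geom}}$, letting $\rho>0$ be the minimal distance between distinct elements of $\mathcal{W}$, and using Thurston's hyperbolic Dehn surgery theorem, I would choose $\delta$ so small that any filling of defect $<\delta$ lies within $\rho/2$ of its base: then a \emph{valley} (local minimum) between two peaks, being a small-defect filling of each, forces those peaks to coincide, so all peaks are equal. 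The same finiteness shows that for small $\delta$ the manifold $M$ has no cusped parent of defect $<\delta$, so the first step of the zigzag must descend and $M$ is itself a peak; hence every peak equals $M$, and $N$ is either $M$ or a filling of $M$ of defect $<\delta$, i.e.\ $N$ lies in the prescribed geometric neighbourhood. For closed $M$ the argument simplifies: its only Dehn-filling neighbours are its finitely many low-volume cusped parents together with parents of defect $\ge 1$, so the infimal defect is positive and $M$ is $d$-isolated, matching its geometric isolation. This proves the topologies coincide; since the geometric topology is Hausdorff, distinct points have positive distance, so $d$ is in fact a metric.
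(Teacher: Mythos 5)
Your reduction of the path metric to $d(M,N)=\min\bigl(1,d_{\mathrm{DF}}(M,N)\bigr)$ with $d_{\mathrm{DF}}(M,N)=\vol(M)-\vol(N)$ for filling pairs is clean and correctly disposes of the uniform continuity of $\vol$, of completeness (via properness of $\vol$ and the fact that geometric convergence implies $d$-convergence), and of the easy containment of topologies; the zigzag normalisation is also a sensible way to organise the hard containment. But the hard containment is exactly where your argument breaks: there is no ``J\o rgensen--Thurston finiteness of cusped manifolds below a volume bound.'' The set of cusped hyperbolic $3$-manifolds of volume $\le V$ is compact but typically infinite: the one-cusped partial fillings of any two-cusped manifold $W$ accumulate at $W$ with volumes increasing to $\vol(W)$, so already for $V$ slightly above the volume of the Whitehead link complement there are infinitely many cusped manifolds below $V$. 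Worse, if $M$ itself has at least two cusps, then for \emph{every} $\delta>0$ the interval $(\vol(M)-\delta,\vol(M))$ contains the volumes of infinitely many cusped manifolds (partial fillings of $M$), so your finite set $\mathcal{W}$ of candidate peaks does not exist, the minimal separation $\rho$ is zero, and the ``two peaks sharing a small-defect valley must coincide'' step collapses. The only finiteness available is for manifolds of a \emph{given} volume, and the paper's proof is built around that: it first shows (by a compactness argument) that for small $\delta$ every manifold with volume within $\delta$ of $\vol(M)$ is a Dehn filling of some manifold of volume exactly $\vol(M)$, thereby reducing to the case $\vol(N)=\vol(M)$, and only then invokes finiteness of $\{\vol=\vol(M)\}$ together with the thin-part/drilling description of the common filling $M'$ to force $N=M$. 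You would need to import that reduction (or an equivalent one) to repair the peak argument.

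Two smaller points. First, your justification that $M$ has no parent of small defect, and that a closed $M$ is $d$-isolated, again appeals to the false finiteness; the correct reason (and the one the paper uses) is that $\vol(\mathcal{H})$ is well-ordered, so no volumes accumulate at $\vol(M)$ from above. Second, your proof of the last bullet only gives $d(M_0,M_1)<d(M_0,M_2)$ when $\vol(M_0)-\vol(M_1)<1$; if both defects are $\ge 1$ then both distances equal $1$ and the strict inequality fails. That is arguably a defect of the proposition as stated rather than of your argument (the formula $d=\min(1,\vol(M_0)-\vol(M_i))$ is correct), but you should at least flag the restriction rather than pass over it with ``strictly increasing where it is below $1$.''
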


We need the following lemma for the first and second assertions. 

\begin{lem}
\label{lem:nbd}
For $v>0$, 
let $M_{1}, \dots , M_{k} \in \mathcal{H}$ denote the hyperbolic 3-manifolds of volume $v$. 
Then there exists $\delta = \delta (v) > 0$ satisfying the following condition: 
If $P \in \mathcal{H}$ satisfies $|\vol (P) - v| < \delta$, 
then $P$ is a (possibly empty) Dehn filling of only one of $M_{1}, \dots , M_{k}$. 
\end{lem}
\begin{proof}
For $v>0$, 
there is a finite subset $A$ of $\mathcal{H}$ 
such that any hyperbolic 3-manifold of volume at most $v$ 
is a (possibly empty) Dehn filling of a manifold in $A$ 
(see \cite[Theorem E.4.8]{benedetti1992lectures}). 
Let $0 < \delta^{\prime} < v - \max (\vol (A) \setminus \{v\})$. 
Then $P \in \mathcal{H}$ satisfying $v - \delta^{\prime} < \vol (P) \leq v$ 
is a (possibly empty) Dehn filling of one of $M_{1}, \dots , M_{k}$. 
There is not a decreasing convergent sequence in $\vol(\mathcal{H})$. 
Hence we can take $\delta^{\prime} > 0$ 
so that $P \in \mathcal{H}$ satisfying $|\vol (P) - v| < \delta^{\prime}$ 
is a (possibly empty) Dehn filling of one of $M_{1}, \dots , M_{k}$. 

Assume that there does not exist $\delta > 0$ satisfying the asserted condition. 
Then there is a sequence $\{ P_{n} \}$ in $\mathcal{H}$ 
such that $\lim_{n \to \infty} \vol(P_{n}) = v$, 
and the manifolds $P_{n}$ are Dehn fillings of both $M_{i}$ and $M_{j}$ 
for distinct $1 \leq i,j \leq k$. 
Hence the sequence $\{ P_{n} \}$ converges to both $M_{i}$ and $M_{j}$ 
with respect to the geometric topology. 
This contradicts the fact that the geometric topology is Hausdorff. 
\end{proof}

\begin{proof}[Proof of Proposition~\ref{prop:metric}]	
The construction of the metric graph implies that  
\[
d(M,N) \geq |\vol (M) - \vol (N)|. 
\]
The fourth assertion follows from this inequality. 
	
For the fifth assertion, 
suppose that  
$M_{0},M_{1},M_{2} \in \mathcal{H}$, 
$M_{1}$ is a Dehn filling of $M_{0}$
and $M_{2}$ is a Dehn filling of $M_{1}$. 
Then 
$d(M_{0}, M_{1}) = \vol (M_{0}) - \vol (M_{1})$ 
and $d(M_{0}, M_{2}) = \vol (M_{0}) - \vol (M_{2})$. 
Hence $d(M_{0}, M_{1}) < d(M_{0}, M_{2})$. 

For the first assertion, 
let $M$ and $N$ be distinct elements of $\mathcal{H}$. 
If $\vol (M) \neq \vol (N)$, 
then $d(M,N) \geq |\vol (M) - \vol (N)| > 0$. 
Suppose that $\vol (M) = \vol (N) = v$. 
Let $\delta$ be the constant given in Lemma~\ref{lem:nbd}. 
Then any path joining $M$ and $N$ in the metric graph 
contains a vertex $P$ such that $|\vol (P) - v| \geq \delta$. 
Hence $d(M,N) \geq 2\delta > 0$. 

For the second assertion, 
it is sufficient to show that 
the convergences with respect to the two topologies are equivalent. 
Consider a sequence $\{ M_{n} \}$ in $\mathcal{H}$. 
Suppose that $\{ M_{n} \}$ converges to $M$ 
with respect to the geometric topology. 
Then $M_{n}$ is a Dehn filling of $M$ for sufficiently large $n$. 
Hence $\lim_{n \to \infty} d(M_{n}, M) = 0$ by the construction. 
Conversely, 
suppose that $\lim_{n \to \infty} d(M_{n}, M) = 0$. 
Then $\lim_{n \to \infty} |\vol (M_{n})- \vol (M)| = 0$. 
By Lemma~\ref{lem:nbd}, 
$M_{n}$ is a Dehn filling of $M$ 
for sufficiently large $n$. 
Hence the sequence  $\{ M_{n} \}$ converges to $M$ 
with respect to the geometric topology. 

For the third assertion, 
take a Cauchy sequence $\{ M_{n} \}$ in $(\mathcal{H}, d)$. 
The inequality $|\vol (M) - \vol (N)| \leq d(M,N)$ implies that 
$\{ \vol (M_{n}) \}$ is a Cauchy sequence in $\mathbb{R}$. 
In particular, the volumes $\vol (M_{n})$ are bounded. 
Hence there is a convergent subsequence of $\{ M_{n} \}$. 
Then the Cauchy sequence $\{ M_{n} \}$ is convergent in $\mathcal{H}$. 
\end{proof}

\section{Separability of the quotient space $\mathcal{C}$}
\label{section:main} 

Theorem~\ref{thm:main} follows from 
Proposition~\ref{prop:t1} and Theorem~\ref{thm:0dim}. 

\begin{prop}
\label{prop:t1}
The quotient space $\mathcal{C}$ is $T_{1}$-space, 
i.e. every singleton in $\mathcal{C}$ is closed. 
\end{prop}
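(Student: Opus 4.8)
The plan is to unwind the definition of the quotient topology and reduce the statement to a closedness property in $\mathcal{H}$. By definition of the quotient topology, a singleton $\{C\} \subseteq \mathcal{C}$ is closed exactly when its preimage under the projection $\pi \colon \mathcal{H} \to \mathcal{C}$ is closed in $\mathcal{H}$. But $\pi^{-1}(C)$ is nothing other than the commensurability class $C$ viewed as a subset of $\mathcal{H}$. So the proposition is equivalent to the assertion that every commensurability class is closed in the geometric topology. Since Proposition~\ref{prop:metric} shows that this topology is induced by the metric $d$, the space $\mathcal{H}$ is in particular Hausdorff, so finite subsets are closed, and I may freely use the continuity and properness of the volume function.

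The heart of the argument is a finiteness lemma: for each $v>0$, the class $C$ contains only finitely many manifolds of volume at most $v$. I would prove this by separating into the arithmetic and non-arithmetic cases. If $C$ is arithmetic, then every $M \in C$ is arithmetic, and the bound is immediate from Theorem~\ref{thm:arithvol}. If $C$ is non-arithmetic, then Theorem~\ref{thm:nonarithcomm} furnishes a minimal orbifold $M_{C}$ that is covered by every $M \in C$; since volume is multiplicative under coverings, the covering degree equals $\vol(M)/\vol(M_{C})$, so the constraint $\vol(M) \le v$ bounds the degree. As $\pi_{1}^{\mathrm{orb}}(M_{C})$ is a lattice, hence finitely generated, it has only finitely many subgroups of index below any fixed bound, and therefore only finitely many orientation-preserving covers of bounded degree up to isomorphism. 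This establishes the lemma in both cases.

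Finally I would deduce closedness of $C$ from the finiteness lemma together with properness of volume. Given any $N \notin C$, set $v = \vol(N) + 1$. The sublevel set $K = \{M \in \mathcal{H} : \vol(M) \le v\}$ is compact because volume is proper, and $C \cap K$ is finite by the lemma. The set $U = \{M \in \mathcal{H} : \vol(M) < v\} \setminus (C \cap K)$ is then open, since it is the intersection of an open set with the complement of a finite (hence closed) set, and it contains $N$ because $\vol(N) = v-1 < v$ while $N \notin C$. Moreover $U \cap C = \emptyset$: any $M \in C$ with $\vol(M) \le v$ lies in $C \cap K$ and has been removed, whereas any $M \in C$ with $\vol(M) > v$ fails the volume constraint. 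Thus $N$ has a neighborhood disjoint from $C$, so $C$ is closed and $\mathcal{C}$ is $T_{1}$.

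The main obstacle is the finiteness lemma, and concretely its non-arithmetic case, where one must invoke the minimal orbifold of Theorem~\ref{thm:nonarithcomm} and translate a volume bound into a bound on covering degree and hence on subgroup index. Once finiteness below every volume level is in hand, the passage to closedness is a routine consequence of the properness of the volume function.
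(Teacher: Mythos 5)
Your proposal is correct and follows essentially the same route as the paper: reduce to showing $\pi^{-1}(C)$ is closed in $\mathcal{H}$, then split into the arithmetic case (handled by Theorem~\ref{thm:arithvol}) and the non-arithmetic case (handled by the minimal orbifold of Theorem~\ref{thm:nonarithcomm}). The only cosmetic difference is that the paper records the key finiteness as discreteness of the volume set $\vol(C)$, whereas you record it as finiteness of $C$ below each volume level; your formulation makes the final topological step slightly more explicit, but the underlying argument is the same.
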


\begin{thm}
\label{thm:0dim}
The quotient space $\mathcal{C}$ is zero-dimensional 
with respect to the small inductive dimension, 
i.e. every point in $\mathcal{C}$ has a neighborhood base 
consisting of clopen sets. 
\end{thm}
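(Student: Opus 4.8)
The plan is to establish the clopen neighborhood base directly. Writing $\pi\colon\mathcal{H}\to\mathcal{C}$ for the projection, it suffices to show that for every class $C$ and every saturated open set $\tilde{U}=\pi^{-1}(U)$ containing $C$, there is a saturated clopen set $W$ with $C\subseteq W\subseteq\tilde{U}$; its image is then a clopen neighborhood inside $U$. First I would dispose of the closed case. A closed manifold is isolated in $\mathcal{H}$, and by the argument underlying Proposition~\ref{prop:t1} its entire commensurability class is a closed, discrete set consisting of isolated points, hence is itself clopen and equal to a single class. Thus closed classes are isolated points of $\mathcal{C}$ and need no further work, so I may assume $M$ is cusped.

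Next I would reformulate the problem as a coloring. Since a neighborhood basis of a cusped manifold $P$ consists of $\{P\}$ together with its Dehn fillings along sufficiently long slopes, a saturated set $W$ is clopen in $\mathcal{H}$ exactly when its indicator is continuous along fillings: for every cusped $P$, all sufficiently long fillings $P(r)$ lie on the same side of $W$ as $[P]$ (closed manifolds impose no condition, being isolated). Hence the task becomes to $2$-color the classes, with color $1$ on $C$ and color $0$ on the closed saturated complement $F=\mathcal{H}\setminus\tilde{U}$, in such a way that for every cusped $P$ cofinitely many fillings of $P$ carry the color of $[P]$; the classes lying in $\tilde{U}\setminus C$ are free.

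I would build such a coloring by correcting the indicator of $\tilde{U}$. This indicator already satisfies the filling condition everywhere except at the boundary manifolds $N\in\partial\tilde{U}\subseteq F$, each of which has infinitely many fillings inside $\tilde{U}$ colored $1$. The remedy is to recolor entire $\tilde{U}$-classes to $0$. Here the key quantitative input is that the volumes realized in $C$ form a discrete set bounded below --- for non-arithmetic $C$ this follows from the minimal orbifold of Theorem~\ref{thm:nonarithcomm}, and for arithmetic $C$ from Theorem~\ref{thm:arithvol} --- so that $C$ meets each bounded volume range in a finite set. Consequently only finitely many fillings of a given $N$ lie in $C$, and I can recolor cofinitely many of the $\tilde{U}$-fillings of $N$ to $0$ without ever touching $C$. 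To organize this globally I would induct on volume: the volume spectrum is well-ordered of type $\omega^{\omega}$ and each Dehn filling strictly decreases volume by Proposition~\ref{prop:metric}, so there is no infinite descending chain of fillings and the recoloring, which only propagates toward strictly smaller volume, is well-founded.

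The hard part will be the cascade. Recoloring a class to $0$ in order to satisfy a boundary manifold forces, through the very filling condition I am imposing, cofinitely many of that class's own fillings to be $0$ as well, and this propagates downward in volume; the resulting $0$-set is nothing but the clopen hull of $F$. The crux is to prove that this hull never engulfs $C$, equivalently that each $M_{j}\in C$ retains cofinitely many color-$1$ fillings. I expect this to be the main obstacle. My plan is to exploit that $C$ is closed and discrete with volumes separated by definite gaps, so that cofinitely many fillings of each $M_{j}$ occupy volumes strictly between consecutive $C$-levels and sit deep inside $\tilde{U}$, far in the filling order from $F$; together with the well-foundedness of volume and the finiteness Theorems~\ref{thm:arithvol} and~\ref{thm:nonarithcomm}, this should confine the cascade to finitely many fillings of any fixed $M_{j}$. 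Verifying this separation quantitatively, and then checking that the terminal coloring really does meet the filling condition simultaneously at every manifold, is where the substantive work lies.
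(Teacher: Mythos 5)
Your proposal leaves the actual content of the theorem unproven, and the tools you reserve for the ``substantive work'' cannot supply it. The reduction to a coloring problem is reasonable in outline (modulo the point that for a manifold with several cusps ``cofinitely many fillings'' is not the right condition: openness requires containing all fillings along slopes that are componentwise sufficiently long, and infinitely many fillings are excluded from every basic neighborhood). But the crux, which you correctly identify --- that the clopen hull of $F=\mathcal{H}\setminus\tilde{U}$ never engulfs $C$ --- is exactly where a purely volume-theoretic argument breaks down. The cascade does not propagate only toward smaller volume: once infinitely many classes have been recolored to $0$, their union can contain fillings of a fixed $M_{j}\in C$ along arbitrarily long slopes, which forces $M_{j}$ itself to be recolored; so the process is not well-founded under the volume order, and discreteness of $\vol(C)$ together with Theorems~\ref{thm:arithvol} and~\ref{thm:nonarithcomm} gives no control over which classes the recoloring visits. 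Indeed, with only these inputs one cannot exclude the scenario in which two incommensurable cusped manifolds $M_{0}\in C$ and $N\in F$ admit commensurable fillings $M_{0}(r)$ and $N(s)$ along arbitrarily long $r$ and $s$; in that scenario no saturated clopen set separates them (the space $\mathcal{C}$ would not even be Hausdorff), and any construction along your lines necessarily fails.

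The missing ingredient is the geometric one the paper isolates in Lemmas~\ref{lem:cover} and~\ref{lem:t2}: for fillings along sufficiently long slopes, any finite covering $M(r)\to N$ restricts to a covering of $M$ itself, because the filled cores become the shortest closed geodesics and are therefore preserved by the covering; this yields the dichotomy that whenever $R\in B(P;\epsilon)$ and $R^{\prime}\in B(Q;\epsilon^{\prime})$ are commensurable, either some manifold commensurable to $Q$ lies in $B(P;\epsilon)$ or some manifold commensurable to $P$ lies in $B(Q;\epsilon^{\prime})$. This is what lets the paper control its transfinite saturation process (the sets $U_{i}$ there are essentially your cascade run in the opposite direction, growing a clopen set outward from $C$ rather than from $F$) and prove closedness of the limit by repeatedly applying that dichotomy to a convergent sequence. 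Without an analogue of these lemmas your plan has no mechanism for confining the cascade, so the gap is not a matter of quantitative bookkeeping but of a missing idea.
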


Theorem~\ref{thm:0dim} implies that $\mathcal{C}$ is regular, 
i.e. every point in $\mathcal{C}$ has a neighborhood base 
consisting of closed sets.  
Theorem~\ref{thm:main} follows 
from the fact that a regular Lindel\"{o}f space is normal 
(see \cite[Theorem 16.8]{willard2004general}). 
Since $\mathcal{C}$ is a countable set, 
trivially it is Lindel\"{o}f. 

\begin{proof}[Proof of Proposition~\ref{prop:t1}]
Take a class $C \in \mathcal{C}$. 
Let $\pi \colon \mathcal{H} \to \mathcal{C}$ 
denote the natural projection. 
It is sufficient to show that 
$\pi^{-1}(C)$ is closed in $\mathcal{H}$. 
This follows from the fact that 
the set $\vol (C) = \{ \vol (M) \mid M \in C \}$ 
is discrete in $\mathbb{R}$. 
If $C$ is arithmetic, 
then $\vol (C)$ is discrete by Theorem~\ref{thm:arithvol}. 
Suppose that $C$ is non-arithmetic. 
By Theorem~\ref{thm:nonarithcomm}, 
there is a hyperbolic orbifold $M_{C}$ 
such that any $M \in C$ is a finite covering of $M_{C}$. 
Hence $\vol (C)$ is contained in $\mathbb{Z}_{>0} \cdot \vol (M_{C})$. 
\end{proof}


For $M \in \mathcal{H}$ and $\epsilon >0$, 
let $B(M; \epsilon)$ denote 
the open ball of radius $\epsilon$ about $M$ 
with respect to the metric constructed in Section~\ref{section:metric}. 
Let $M(r)$ denote the Dehn filling of $M$ 
along a slope $r=(r_{1}, \dots , r_{k})$. 
For $1 \leq i \leq k$, 
let $\gamma_{i}$ denote the $i$-th filled core in $M(r)$, i.e. 
the simple closed geodesic in the $i$-th filled solid torus in $M(r)$. 
We identify $M(r) \setminus \bigcup \gamma_{i}$ with $M$.

\begin{lem}
\label{lem:cover}
For $M \in \mathcal{H}$, 
there exists $\epsilon (M) >0$ satisfying the following conditions: 
At first, any element of $B(M; \epsilon (M))$ is a (possibly empty) Dehn filling of $M$. 
Suppose that $M(r) \in B(M; \epsilon (M))$, 
and $f \colon M(r) \to N$ is a finite covering to a hyperbolic orbifold $N$. 
Then $f$ can be restricted a finite covering 
from $M$ to an orbifold. 
Furthermore, the closed geodesics in $N$ 
coming from the filled cores in $M(r)$ 
are shorter than any other closed geodesic in $N$. 
\end{lem}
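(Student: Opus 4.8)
The plan is to construct $\epsilon(M)$ as a minimum of several quantities, each controlling one of the three assertions, and to use the geometric convergence of Dehn fillings together with the Margulis lemma to make the filled cores "canonically short."

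First I would invoke Proposition~\ref{prop:metric} to choose $\epsilon_0 > 0$ so small that every element of $B(M; \epsilon_0)$ is a (possibly empty) Dehn filling of $M$; this is exactly the cusped case already proved there, so the first assertion is immediate once $\epsilon(M) \le \epsilon_0$. The geometric meaning of taking $\epsilon$ small is that the filling slopes $r_i$ are forced to be long, so by Thurston's hyperbolic Dehn surgery theorem the filled cores $\gamma_i$ are closed geodesics whose lengths tend to $0$ as $\epsilon \to 0$. The plan is to fix a uniform Margulis constant $\mu_3$ for dimension $3$ and then shrink $\epsilon$ further so that each core length $\length(\gamma_i)$ is below $\mu_3$ and, moreover, so that the Margulis tubes around the $\gamma_i$ are embedded and disjoint. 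This sets up the separation between the short cores and the rest of the geometry.

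Next, for the covering assertion, I would argue as follows. Since $M = M(r) \setminus \bigcup \gamma_i$ is the complement of the filled cores, and a covering map $f \colon M(r) \to N$ is a local isometry, the preimages $f^{-1}(\text{thick part})$ behave functorially. The filled cores are closed geodesics of length $< \mu_3$, so their images under $f$ are again short closed geodesics in $N$, and $f$ maps each Margulis tube around a $\gamma_i$ onto a Margulis tube in $N$. Removing these cores from $M(r)$ corresponds under $f$ to removing their images from $N$; the restriction of $f$ to $M = M(r) \setminus \bigcup \gamma_i$ is then a finite covering onto the orbifold $N \setminus (\text{image cores})$, which is a hyperbolic orbifold obtained from $N$ by drilling. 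This gives the second assertion.

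The furthermore clause is where the main obstacle lies, and it is genuinely a comparison of geodesic lengths in $N$. I would want every closed geodesic of $N$ other than the images of the $\gamma_i$ to be longer than each image core. The natural strategy is to use the Margulis lemma quantitatively: by choosing $\epsilon$ small I can force $\length(\gamma_i)$ below $\mu_3$, and any closed geodesic in $N$ of length below $\mu_3$ must lie in a Margulis tube, hence be a core of such a tube. The difficulty is that $N$ is a priori an arbitrary quotient, so I must rule out the existence of \emph{other} short geodesics in $N$ not coming from the $\gamma_i$. The key observation is that any short geodesic in $N$ lifts to a short geodesic in the cover $M(r)$, and for sufficiently small $\epsilon$ the only short closed geodesics in $M(r)$ are precisely the filled cores $\gamma_i$ (the thick part of $M(r)$ converges geometrically to the thick part of the cusped manifold $M$, whose systole is bounded below). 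Thus, after shrinking $\epsilon$ once more so that $\length(\gamma_i)$ is strictly below the systole of the thick part of $M$, every closed geodesic of $N$ distinct from the image cores has length bounded below by a positive constant exceeding $\max_i \length(\gamma_i)$, which yields the strict inequality. The care needed here is to make all these length bounds simultaneous and to confirm that the degree of the covering does not spoil the comparison, since lengths are preserved, not multiplied, under covering maps of geodesics.
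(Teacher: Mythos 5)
Your overall structure matches the paper's (first assertion from Proposition~\ref{prop:metric}, then restrict $f$ by showing that the preimage of the image cores is exactly the set of cores, then compare lengths), but there is a genuine gap in the quantitative step, and it is flagged by your own closing sentence. It is \emph{not} true that ``lengths are preserved, not multiplied, under covering maps of geodesics.'' If $f \colon M(r) \to N$ has degree $n$ and $\gamma$ is a closed geodesic in $M(r)$, then $f(\gamma)$ is a closed geodesic in $N$ of length $\length(\gamma)/m$ for some integer $1 \le m \le n$; dually, a closed geodesic $\delta$ in $N$ pulls back to closed geodesics in $M(r)$ of lengths $m_j \cdot \length(\delta)$ with $\sum m_j = n$. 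So a short geodesic in $N$ need not lift to a short geodesic in $M(r)$, and the preimage of the thin part of $N$ need not lie in the thin part of $M(r)$ (injectivity radius can only increase in a cover). Concretely: to rule out a closed geodesic $\delta$ of $N$, other than an image core, with $\length(\delta) \le \length(f(\gamma_i))$, you must observe that its preimage components have length at most $n \cdot \length(\delta) \le n \cdot \length(\gamma_i)$, so the condition you need is $n \cdot \max_i \length(\gamma_i) < s$, where $s$ is the shortest length of a non-core closed geodesic in $M(r)$ --- not merely $\max_i \length(\gamma_i) < s$ as your argument would give. The same factor of $n$ is needed to conclude $f^{-1}(f(\bigcup\gamma_i)) = \bigcup\gamma_i$ for the second assertion.

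This matters because $\epsilon(M)$ must be chosen \emph{before} the covering $f$ is given, so you need the condition $n \cdot \length(\gamma_i) < s$ to hold for all possible coverings simultaneously. The missing ingredient, which the paper supplies, is a uniform bound on the degree: since there is a positive lower bound $v_0$ for the volume of a hyperbolic $3$-orbifold (Marshall--Martin, or classically Kazhdan--Margulis), one has $n = \vol(M(r))/\vol(N) < \vol(M)/v_0$, a bound depending only on $M$. With that in hand, shrinking $\epsilon(M)$ so that $(\vol(M)/v_0)\cdot \length(\gamma_i)$ is below the non-core systole of $M(r)$ completes both the restriction-of-covering step and the ``furthermore'' clause. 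Without bounding $n$, your argument does not close.
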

\begin{proof}
The first condition follows from Lemma~\ref{lem:nbd}. 
Let $n$ denote the degree of $f$. 
There is a hyperbolic 3-orbifold of smallest volume $v_{0}$, 
which is determined by Marshall and Martin~\cite{marshall2012minimal}. 
Hence $n = \vol (M(r))/\vol (N) < \vol(M)/v_{0}$. 

Suppose that $\epsilon (M) >0$ is sufficiently small. 
Then the lengths of $\gamma_{i}$'s are small. 
The representation of $\pi_{1}(M(r) \setminus \bigcup \gamma_{i})$ 
induced by the holonomy representation of $\pi_{1}(M(r))$ 
is algebraically near to the holonomy representation of $\pi_{1}(M)$. 
Moreover, $n \cdot \length (\gamma_{i})$ is smaller than 
the length of a closed geodesic in $M(r)$ 
which is not contained in the filled solid tori. 
Therefore $f^{-1}(f(\bigcup \gamma_{i})) = \bigcup \gamma_{i}$. 
Then $f$ can be restricted to 
a covering from $M = M(r) \setminus \bigcup \gamma_{i}$ 
to $N \setminus f(\bigcup \gamma_{i})$. 
\end{proof}

\begin{lem}
\label{lem:t2}
For $M \in \mathcal{H}$, 
let $\epsilon (M)$ be as in Lemma~\ref{lem:cover}. 
Suppose that 
$P, Q \in \mathcal{H}$, 
$\epsilon < \epsilon (P)$, $\epsilon^{\prime} < \epsilon (Q)$, 
and there are $R \in B(P; \epsilon)$ 
and $R^{\prime} \in B(Q; \epsilon^{\prime})$ 
such that $R$ and $R^{\prime}$ are commensurable. 
Then either there is $Q^{\prime} \in B(P; \epsilon)$ commensurable to $Q$ 
or there is $P^{\prime} \in B(Q; \epsilon^{\prime})$ commensurable to $P$. 
Furthermore, if $R = R^{\prime}$, 
either $Q \in B(P; \epsilon)$ or $P \in B(Q; \epsilon^{\prime})$. 
\end{lem}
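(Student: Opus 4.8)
The plan is to first invoke Lemma~\ref{lem:cover} to write $R = P(r)$ and $R' = Q(s)$ as Dehn fillings, with filled cores $\gamma_i \subset R$ and $\gamma'_j \subset R'$, and then to transport both systems of cores into a single orbifold lying beneath $R$ and $R'$. For a non-arithmetic class this orbifold is the canonical minimal orbifold $M_C$ of Theorem~\ref{thm:nonarithcomm}: both $R$ and $R'$ cover $M_C$ by coverings $f, f'$ of degree at most $\vol(P)/v_0$ resp. $\vol(Q)/v_0$. Applying the last clause of Lemma~\ref{lem:cover} to $f$ and to $f'$, the images $A := \bigcup f(\gamma_i)$ and $A' := \bigcup f'(\gamma'_j)$ are each strictly shorter than every remaining closed geodesic of $M_C$; hence each of $A$ and $A'$ is the set of all geodesics of $M_C$ below some length threshold, so $A$ and $A'$ are \emph{nested} initial segments of the length spectrum of $M_C$. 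This nesting is the structural fact that drives everything.

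Suppose, after relabelling, that $A \subseteq A'$. Restricting $f'$ gives a covering $Q \to M_C \setminus A'$, and restricting $f$ gives $P \to M_C \setminus A$. Since $A \subseteq A'$, the orbifold $M_C \setminus A$ is the Dehn filling of $M_C \setminus A'$ obtained by reinserting the finitely many geodesics of $A' \setminus A$ as cores. I would lift this filling through the covering $Q \to M_C \setminus A'$: filling the cusps of $Q$ lying over $A' \setminus A$, with slopes pulled back from $M_C$, produces a hyperbolic manifold $P'$ that covers $M_C \setminus A$, hence is commensurable with $P$. By construction $P'$ is a partial Dehn filling of $Q$ through which the total filling $R' = Q(s)$ factors, so $Q \to P' \to R'$ is a chain of Dehn fillings, and the monotonicity clause of Proposition~\ref{prop:metric} gives $d(Q,P') < d(Q,R') < \epsilon'$, i.e. $P' \in B(Q; \epsilon')$. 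This is exactly the second alternative; the case $A' \subseteq A$ yields $Q' \in B(P;\epsilon)$ commensurable with $Q$ symmetrically. When $R = R'$ one runs the same argument with $M_C$ replaced by $R$ itself, so that $f, f'$ are the identity: now $\{\gamma_i\}$ and $\{\gamma'_j\}$ are nested initial segments of the length spectrum of $R$, the reinserted filling is literally $P = Q(\text{extra cores})$, and the construction outputs $P' = P$ on the nose, giving the sharper conclusion $P \in B(Q;\epsilon')$.

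The main obstacle is the arithmetic case, where Theorem~\ref{thm:nonarithcomm} is unavailable: an arithmetic commensurability class has dense commensurator and in general no minimal, indeed no common, quotient orbifold, so there need be no single $N$ beneath both $R$ and $R'$ to host the two core systems. To cope I would replace $M_C$ by quotients of bounded degree: each of $R, R'$ covers a maximal orbifold in its commensurability class with degree at most $\vol(P)/v_0$ resp. $\vol(Q)/v_0$, and I would either show that for fillings with sufficiently short cores these maximal quotients can be chosen compatibly, so that the nesting argument still applies, or else work directly in a common finite cover $W$ of $R$ and $R'$, drill the two preimage systems $\bigcup f^{-1}(\gamma_i)$ and $\bigcup (f')^{-1}(\gamma'_j)$, and use the bound on the covering degree to exclude the high-wrapping lifts that would otherwise spoil the nesting of short geodesics in $W$. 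As a fallback, since Theorem~\ref{thm:arithvol} makes the arithmetic manifolds of volume below $\vol(P)$ finite in number, one can hope to reduce the arithmetic case to finitely many explicit comparisons. Controlling this wrapping phenomenon, equivalently certifying that the two families of filled cores remain nested after passing to the common cover or to the bounded-degree quotients, is the step I expect to demand the most care.
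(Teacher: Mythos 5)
Your non-arithmetic argument is essentially the paper's proof: pass to the common quotient orbifold $\hat{R}$ furnished by Theorem~\ref{thm:nonarithcomm}, use the last clause of Lemma~\ref{lem:cover} to see that the images of the two core systems are initial segments of the length spectrum of $\hat{R}$ and hence nested, and then take the preimage of the larger drilled orbifold in $R$ (equivalently, the intermediate partial filling of $Q$, as you do) to produce $Q^{\prime}$ or $P^{\prime}$; the monotonicity clause of Proposition~\ref{prop:metric} places it in the correct ball, and the $R=R^{\prime}$ case follows by taking $\hat{R}=R$ so that the coverings are identities. All of this matches the paper, and your justification of the membership $P^{\prime}\in B(Q;\epsilon^{\prime})$ via the chain $Q\to P^{\prime}\to R^{\prime}$ is if anything more explicit than the paper's.

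The place where you diverge is the arithmetic case, which you flag as the main obstacle and leave unresolved, sketching a common-finite-cover argument with a ``wrapping'' analysis that you admit you cannot yet certify. The paper does none of this: its entire treatment is the sentence ``We may assume that $R$ and $R^{\prime}$ are non-arithmetic by Theorem~\ref{thm:arithvol}.'' The point is that all manifolds in $B(P;\epsilon)$ and $B(Q;\epsilon^{\prime})$ have volume at most $\max(\vol(P),\vol(Q))$, and by Theorem~\ref{thm:arithvol} only finitely many arithmetic manifolds exist below that volume bound; moreover in every application of the lemma (the proof of Theorem~\ref{thm:0dim}) the manifolds playing the roles of $R$ and $R^{\prime}$ have already been arranged to be non-arithmetic. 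So your ``fallback'' (reduce to finitely many arithmetic exceptions) is in fact the intended and sufficient resolution, and the bounded-degree maximal quotients and the control of high-wrapping lifts in a common cover are machinery you do not need. You are correct that Theorem~\ref{thm:nonarithcomm} genuinely fails for arithmetic classes, so identifying the issue is to your credit; but as written your proposal commits to the hard route and does not close it, whereas the finiteness route closes it immediately.
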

\begin{proof}
We may assume that $R$ and $R^{\prime}$ are non-arithmetic by Theorem~\ref{thm:arithvol}. 
Hence there is an orbifold $\hat{R}$ such that 
$R$ and $R^{\prime}$ are finite coverings of $\hat{R}$ by Theorem~\ref{thm:nonarithcomm}. 
We may regard $P \subset R$ and $Q \subset R^{\prime}$ 
so that $\hat{P}, \hat{Q} \subset \hat{R}$ are respectively 
the image of $P$ and $Q$ by the coverings. 
Then $P$ and $Q$ are respectively 
finite coverings of $\hat{P}$ and $\hat{Q}$ 
by Lemma~\ref{lem:cover}. 
The second assertion in Lemma~\ref{lem:cover} 
implies that $\hat{P} \subset \hat{Q}$ or $\hat{Q} \subset \hat{P}$. 
If $\hat{P} \subset \hat{Q}$, 
the preimage $Q^{\prime}$ of $\hat{Q}$ by the covering $R \to \hat{R}$ 
satisfies $P \subset Q^{\prime} \subset R$. 
Then $Q^{\prime}$ is commensurable to $Q$, 
and $Q^{\prime} \in B(P; \epsilon)$. 
The symmetric argument holds if $\hat{Q} \subset \hat{P}$. 
If $R = R^{\prime}$, we may assume that $P = \hat{P}$ and $Q = \hat{Q}$. 
\end{proof}

Proposition~\ref{prop:t1} and Lemma~\ref{lem:t2} immediately imply that 
$\mathcal{C}$ is Hausdorff.

\begin{proof}[Proof of Theorem~\ref{thm:0dim}]
If a class $C \in \mathcal{C}$ consists of closed manifolds, 
then $\pi^{-1}(C)$ is a clopen subset in $\mathcal{H}$. 
Hence $C$ is an isolated point in $\mathcal{C}$. 
Then the assertion for $C$ is trivial. 

Suppose that 
a class $C \in \mathcal{C}$ consists of cusped manifolds. 
We arbitrarily arrange the countable set so that 
$\pi^{-1}(C) = \{ M_{0,j} \in \mathcal{H} \mid j \in \mathbb{Z}_{\geq 0} \}$. 
Take $\epsilon_{i,j} > 0$ for $i, j \geq 0$. 
We write $U_{0,j} = B(M_{0,j}; \epsilon_{0,j})$ 
and $U_{0} = \bigcup_{j} U_{0,j}$. 
Inductively, for $i \geq 1$, 
we arrange the set so that 
$\pi^{-1}(\pi(U_{i-1})) = \{ M_{i,j} \in \mathcal{H} \mid j \in \mathbb{Z}_{\geq 0} \}$. 
We write $U_{i,j} = B(M_{i,j}; \epsilon_{i,j})$ 
and $U_{i} = U_{i-1} \cup \bigcup_{j} U_{i,j}$. 
After all, 
we have an increasing sequence 
$U_{0} \subset U_{1} \subset U_{2} \subset \dots \subset \mathcal{H}$. 
We write $U = U(\{ \epsilon_{i,j} \}) = \bigcup_{i} U_{i}$. 
The construction implies that 
$\pi^{-1}(\pi(U(\{ \epsilon_{i,j} \}))) = U(\{ \epsilon_{i,j} \})$. 
The set $\{ \pi(U(\{ \epsilon_{i,j} \}) \mid \epsilon_{i,j}>0 \}$ 
is a neighborhood base of $C$.

The set $U$ is open in $\mathcal{C}$, since it is the union of open balls. 
We claim that  
$U$ is closed in $\mathcal{C}$ if $\epsilon_{i,j}$ are sufficiently small. 
We take $0 < \epsilon_{i,j} < \epsilon (M_{i,j})$. 
Suppose that a sequence $\{ N_{k} \}$ in $U$ converges to $N \in \mathcal{H}$. 
It is sufficient to show that $N \in U$. 
Fix $\epsilon < \epsilon (N)$, 
where $\epsilon (N)$ is the constant given in Lemma~\ref{lem:cover}. 
By Theorem~\ref{thm:arithvol}, 
we may assume that $N_{k}$ is non-arithmetic, 
and $N_{k} \in B(N; \epsilon) \setminus \pi^{-1}(C)$ for all $k$. 

For fixed $n$, suppose that $N_{n} \in U_{i} \setminus U_{i-1}$. 
(Let $U_{-1} = \pi^{-1}(C)$.) 
We first suppose that $N_{n} \notin \pi^{-1}(\pi(U_{i-1}))$. 
Then there is $M_{i,j} \in \pi^{-1}(\pi(U_{i-1}))$ 
and $N_{n} \in B(M_{i,j}; \epsilon_{i,j})$. 
Since $N_{n} \in B(M_{i,j}; \epsilon_{i,j})$ 
and $N_{n} \in B(N; \epsilon)$, 
we have either $N \in B(M_{i,j}; \epsilon_{i,j})$ 
or $M_{i,j} \in B(N; \epsilon)$ by Lemma~\ref{lem:t2}. 
If $N \in B(M_{i,j}; \epsilon_{i,j})$, 
we have $N \in U$. 
If $M_{i,j} \in B(N; \epsilon)$, 
the proof is reduced to the case that $N_{n} \in \pi^{-1}(\pi(U_{i-1}))$. 

We next suppose that $N_{n} \in \pi^{-1}(\pi(U_{i-1}))$. 
There are $N_{n}^{\prime} \in U_{i-1}$ 
and $M_{i-1,j} \in \pi^{-1}(\pi(U_{i-2}))$ 
such that $N_{n}^{\prime}$ is commensurable to $N_{n}$, 
and $N_{n}^{\prime} \in B(M_{i-1,j}; \epsilon_{i-1,j})$. 
Since $N_{n}^{\prime} \in B(M_{i-1,j}; \epsilon_{i-1,j})$ 
and $N_{n} \in B(N; \epsilon)$, 
either there is 
$N^{\prime} \in B(M_{i-1,j}; \epsilon_{i-1,j})$ 
commensurable to $N$ 
or there is $M_{i-1,j}^{\prime} \in B(N; \epsilon)$ 
commensurable to $M_{i-1,j}$ 
by Lemma~\ref{lem:t2}. 
If $N^{\prime} \in B(M_{i-1,j}; \epsilon_{i-1,j})$, 
we have $N \in U$. 
If $M_{i-1,j}^{\prime} \in B(N; \epsilon)$, 
we can continue the argument for $M_{i-1,j}^{\prime} \in \pi^{-1}(\pi(U_{i-2}))$ 
instead of $N_{n} \in \pi^{-1}(\pi(U_{i-1}))$. 
Then we have eventually $N \in U$. 
\end{proof}

We show that 
the quotient topology is too fine to metrize.

\begin{prop}
\label{prop:n1c}
The quotient space $\mathcal{C}$ is not first countable. 
In particular, $\mathcal{C}$ is not metrizable. 
\end{prop}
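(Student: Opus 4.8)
The plan is to exhibit a single point of $\mathcal C$ with no countable neighbourhood base; a \emph{cusped} class $C$ is the natural candidate, since closed classes are isolated points (as observed in the proof of Theorem~\ref{thm:0dim}) and are first countable for trivial reasons. Write $\pi^{-1}(C)=\{M_{0,j}\mid j\in\mathbb Z_{\ge0}\}$ as in the proof of Theorem~\ref{thm:0dim}; by Proposition~\ref{prop:t1} this is a closed discrete subset of $\mathcal H$, it is countably infinite because $\pi_1(M_{0,0})$ is residually finite, and each $M_{0,j}$ is a limit point of $\mathcal H$ since it is cusped. I would then run the standard diagonal argument that shows the box topology is not first countable, with the points $M_{0,j}$ playing the role of the coordinates. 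For a neighbourhood $V$ of $C$ set
\[
 r_V(j)=\sup\{\epsilon>0\mid B(M_{0,j};\epsilon)\subseteq\pi^{-1}(V)\}.
\]
Because $\pi^{-1}(V)$ is open and contains $M_{0,j}$ we have $r_V(j)>0$ for every $j$, and evidently $V\subseteq V'$ implies $r_V(j)\le r_{V'}(j)$.

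Suppose, for contradiction, that $\{V_k\}_{k\ge0}$ is a countable neighbourhood base of $C$. The goal is to construct a single neighbourhood $W$ of $C$ with $r_W(j)<r_{V_j}(j)$ for every $j$. Granting this, the base property yields some $k$ with $V_k\subseteq W$, whence $r_{V_k}(k)\le r_W(k)<r_{V_k}(k)$, a contradiction. So $C$ has no countable base, $\mathcal C$ is not first countable, and, since every metrizable space is first countable, $\mathcal C$ is not metrizable. To build $W$ it suffices, for each $j$, to choose a Dehn filling $y_j$ of $M_{0,j}$ with $0<d(M_{0,j},y_j)<r_{V_j}(j)$ (long slopes make the distance as small as we like) and then to produce an open $W\ni C$ with $\pi(y_j)\notin W$ for all $j$: indeed $y_j\in B(M_{0,j};\rho)\setminus\pi^{-1}(W)$ for every $\rho>d(M_{0,j},y_j)$ forces $r_W(j)\le d(M_{0,j},y_j)<r_{V_j}(j)$. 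Such a $W$ exists precisely when $C\notin\overline{\{\pi(y_j)\}}$, so the whole argument reduces to choosing the defeating fillings $y_j$ so that their commensurability classes do \emph{not} accumulate at $C$.

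This last point is the main obstacle. The danger is that the $y_j$ are short-slope fillings whose classes, although all distinct from $C$ (they are closed, $C$ is cusped), nevertheless cluster back onto $C$. Here I would invoke Lemma~\ref{lem:cover}: if the classes $\pi(y_{j_m})$ accumulated at $C$, then members of these classes would converge geometrically to some $M_*\in\pi^{-1}(C)$, hence be short-slope fillings of $M_*$, and passing to the minimal orbifolds of the classes (Theorem~\ref{thm:nonarithcomm}) the filled cores would become the shortest geodesics downstairs; comparing the two coverings in Lemma~\ref{lem:cover} forces the $y_{j_m}$ to be commensurable to ever shorter fillings of a \emph{single} cusped orbifold lying in $C$. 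Since $C$ contains only countably many cusped orbifolds, I would diagonalize over them: having enumerated these orbifolds, I choose $y_j$ in one of the infinitely many mutually incommensurable filling classes that $M_{0,j}$ possesses by H.~Yoshida~\cite{yoshida2017commensurability}, and moreover one not commensurable to any filling of the first $j$ of these orbifolds — infinitely many such classes survive inside every ball about $M_{0,j}$ by the arithmetic‑rarity count of Theorem~\ref{thm:arithvol}. Verifying that enough incommensurable candidates remain after these exclusions, and that this selection can be made uniformly in $j$ so that $\{\pi(y_j)\}$ genuinely stays off $C$, is the crux of the proof; the remaining bookkeeping (that the balls added around the closed filled manifolds are single points, by Proposition~\ref{prop:metric}, and that the cusped members of the saturation can be separated from each $y_j$ by volume) is routine and parallels the proof of Theorem~\ref{thm:0dim}.
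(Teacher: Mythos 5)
Your diagonalization framework---planting one ``defeating'' filling $y_j$ inside each ball $B(M_{0,j};r_{V_j}(j))$ and then separating $C$ from $\{\pi(y_j)\}$ by a single neighbourhood $W$---is the same mechanism that makes the Fr\'echet--Urysohn fan non-first-countable, which is how the paper argues: it exhibits a copy of the fan near a cusped class $C$, the spokes being sequences of Dehn fillings of the infinitely many finite covers of a fixed $M\in\pi^{-1}(C)$, kept in distinct classes by Lemma~\ref{lem:fu}. The formal skeleton of your argument (the functions $r_V$, the contradiction $r_{V_k}(k)\le r_W(k)<r_{V_k}(k)$) is correct. The gap is exactly at the step you yourself flag as ``the crux,'' and it is genuine in two respects. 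First, your criterion for $C\notin\overline{\{\pi(y_j)\}}$ is phrased via geometric accumulation: ``if the classes $\pi(y_{j_m})$ accumulated at $C$, then members of these classes would converge geometrically to some $M_*\in\pi^{-1}(C)$.'' In a quotient topology this implication fails: $C$ lies in the closure of a saturated set $A$ as soon as every saturated open set containing $\pi^{-1}(C)$ meets $A$, which can happen even when $\overline{A}\cap\pi^{-1}(C)=\emptyset$ in $\mathcal{H}$. The only handle on neighbourhoods of $C$ is the base $\pi(U(\{\epsilon_{i,j}\}))$ from Theorem~\ref{thm:0dim}, so you must show that some $U(\{\epsilon_{i,j}\})$ misses $\bigcup_j\pi^{-1}(\pi(y_j))$, which means controlling every saturation level $U_i$, not just geometric limits of representatives; note the circularity risk that if a ball at any level captures one manifold commensurable to some $y_l$, the next saturation swallows all of $\pi^{-1}(\pi(y_l))$.

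Second, the actual selection of the $y_j$ is only gestured at. Mutual incommensurability of the $y_j$ (which is what \cite{yoshida2017commensurability} and Theorem~\ref{thm:arithvol} give you) is not the relevant condition; what must be excluded is commensurability of some $y_j$ with a short-slope filling of \emph{any} manifold occurring in the iterated saturations, and your ``diagonalize over the cusped orbifolds in $C$'' step neither enumerates the right collection nor verifies that candidates survive the exclusions. The paper's Lemma~\ref{lem:fu} is precisely the device that does this work: for a covering $N\to M$ of degree $n>1$ it produces infinitely many fillings of $N$ incommensurable to every filling of $M$, using the shortest-geodesic comparison of Lemma~\ref{lem:cover}. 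As written, your proposal therefore reduces the proposition to an unproven separation statement; to close it you would either import Lemma~\ref{lem:fu} and the fan, or rerun the closedness argument from the proof of Theorem~\ref{thm:0dim} for the specific countable family $\{\pi(y_j)\}$.
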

\begin{proof}
The Frech\'{e}t-Urysohn fan $F$ is defined 
as the quotient topological space $(\bigsqcup_{\mathbb{Z}_{>0}} X) / \sim $, 
where $X = \{ 0 \} \cup \{ 1/n \mid n \in \mathbb{Z}_{>0}\} 
\subset \mathbb{R}$ 
and all the copies of $0 \in X$ are collapsed to a point. 
Then $F$ is not first countable  
(see \cite[c-14]{hart2003encyclopedia} for details). 

Let $C \in \mathcal{C}$ be a class consisting of cusped manifolds. 
Then a neighborhood of $C$ contains the Frech\'{e}t-Urysohn fan
by Lemma~\ref{lem:fu}. 
We recall that 
there are infinitely many finite coverings of $M$, 
since $\pi_{1}(M)$ is residually finite. 
\end{proof}

\begin{lem}
\label{lem:fu}
Let $M,N \in \mathcal{H}$. 
Suppose that $f \colon N \to M$ is an $n$-sheeted covering for $n>1$. 
Then there are infinitely many Dehn fillings of $N$ 
which are not commensurable to any Dehn filling of $M$. 
\end{lem}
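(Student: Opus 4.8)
The plan is to restrict to long-slope fillings and reduce the phrase ``commensurable to a Dehn filling of $M$'' to the condition that $N$ covers one of finitely many fixed orbifolds. Assume $N$ is cusped (otherwise it has no nonempty fillings and the statement is vacuous), and consider only fillings $N(r) \in B(N; \epsilon(N))$, of which there are infinitely many; by Theorem~\ref{thm:arithvol} all but finitely many are non-arithmetic, so it suffices to produce infinitely many non-arithmetic good $N(r)$. Suppose such an $N(r)$ is commensurable to a Dehn filling $M(s)$ of $M$. Then $M(s)$ is non-arithmetic as well, and by Theorem~\ref{thm:nonarithcomm} both cover a common minimal orbifold $O$. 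Applying Lemma~\ref{lem:cover} to the covering $N(r) \to O$, it restricts to a covering $N \to O_{0}$, where $O_{0}$ is obtained from $O$ by drilling the shortest geodesics $\Sigma_{O}$ of $O$ (the images of the filled cores of $N(r)$), and $N(r) \to O$ is exactly the filling of $N \to O_{0}$ along the lift of the meridians of $\Sigma_{O}$. In particular $r$ is the lift to $N$ of a filling slope on $O_{0}$; I will say that such $r$ \emph{descends to} $O_{0}$.

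Next I would show that the orbifolds $O_{0}$ that can arise form a finite set. Since $N \to O_{0}$ is a covering, $O_{0}$ is commensurable to $N$ and $\vol(O_{0}) \le \vol(N)$; writing $v_{0}$ for the minimal orbifold volume of Marshall and Martin~\cite{marshall2012minimal}, the degree $d = \deg(N \to O_{0}) = \vol(N)/\vol(O_{0})$ is at most $\vol(N)/v_{0}$. If $N$ is non-arithmetic, its commensurator is discrete and contains $\pi_{1}(N)$ with finite index, so there are finitely many intermediate groups, hence finitely many such $O_{0}$; if $N$ is arithmetic, then $O_{0}$ is arithmetic of volume at most $\vol(N)$, and finiteness follows from the orbifold version of Theorem~\ref{thm:arithvol} (Borel~\cite{borel1981commensurability}). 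Moreover $d > 1$: comparing volumes, $\vol(M(s)) \le \vol(M) = \vol(N)/n$, while $\vol(N(r))$ is close to $\vol(N)$ for long slopes, so $\deg(M(s) \to O) = d \cdot \vol(M(s))/\vol(N(r))$ is strictly smaller than $d$ by a factor near $n$; being a positive integer, this forces $d > 1$. This is the one place the hypothesis $n > 1$ is used.

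Finally I would count slopes. Because $\deg(N \to O_{0}) = d > 1$, over each cusp of $O_{0}$ the covering either splits into several cusps of $N$ or has a single preimage cusp of cusp-degree $\ge 2$; in either case the filling slopes on $N$ that descend to $O_{0}$ form a proper sub-collection whose complement among all filling slopes is infinite, since incompatible fillings of two preimage cusps, or primitive slopes outside a finite-index sublattice, never descend. Taking the union over the finitely many $O_{0}$, the descending slopes still omit infinitely many long slopes $r$. For each such $r$, the contrapositive of the reduction above shows that the non-arithmetic filling $N(r)$ is commensurable to no Dehn filling of $M$ (a commensurable arithmetic filling is excluded since commensurable manifolds share arithmeticity), which yields the desired infinitely many fillings.

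The main obstacle is the finiteness of the family $\{O_{0}\}$: this is immediate in the non-arithmetic case from discreteness of the commensurator, but when $N$ is arithmetic the commensurator is dense, and one must instead invoke the arithmetic volume finiteness for orbifolds to bound the number of $O_{0}$ of volume at most $\vol(N)$. Extracting from Lemma~\ref{lem:cover} that $N$ genuinely covers the drilled orbifold $O_{0}$, so that $r$ really descends to $O_{0}$, is the other step requiring care.
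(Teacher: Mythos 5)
Your steps 1--4 are sound and actually supply details the paper leaves implicit: the finiteness of the family $\{O_{0}\}$ (discrete commensurator in the non-arithmetic case, Borel's finiteness in the arithmetic case) and the volume computation forcing $d=\deg(N\to O_{0})>1$, which is a clean way to spend the hypothesis $n>1$. The genuine gap is in the final slope count. When a filled cusp $c_{1}$ of $N$ is the \emph{unique} preimage of a cusp $c$ of $O_{0}$ --- perfectly compatible with $d\ge 2$, since the whole degree can sit on one cusp --- the slopes that descend to $O_{0}$ are \emph{all} of them, not a proper sub-collection: writing a primitive class $r_{1}\in\Lambda_{c_{1}}$ as $r_{1}=j\beta$ with $\beta$ primitive in $\Lambda_{c}$, the orbifold filling of $c$ along $\beta$ with cone angle $2\pi/j$ pulls back under the cusp covering to exactly the honest solid-torus filling of $c_{1}$ along $r_{1}$. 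So ``primitive slopes outside a finite-index sublattice never descend'' fails once singular (cone) fillings of $O_{0}$ are allowed --- and they must be allowed, because the minimal orbifold $O$ of Theorem~\ref{thm:nonarithcomm} may well be a singular filling of $O_{0}$. You are caught between two readings of ``descend'': if it permits singular fillings, the complement you need to be infinite can be empty; if it means non-singular fillings only, the complement is infinite but the implication ``$N(r)$ commensurable to some $M(s)$ $\Rightarrow r$ descends'' is no longer established.

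To close this you must use more than the existence of some $O_{0}$ with $d>1$: either exploit that the \emph{manifold} $M(s)$ also covers $O=O_{0}(\beta,j)$, so that the cone order $j$ along the short geodesics must be unwrapped by the covering $M(s)\to O$, which constrains the admissible $r$; or follow the paper's route. The paper's (very terse) proof instead selects the fillings $N(r)$ that are \emph{incompatible with the given covering} $f\colon N\to M$, i.e.\ those over which $f$ does not extend to the filling solid tori, and argues via Lemma~\ref{lem:cover} that long such fillings cannot be commensurable to any filling of $M$; incompatible slopes exist in infinite supply precisely because $n>1$. Your framework --- finitely many orbifold quotients $O_{0}$ of $N$, each of degree greater than one --- is a reasonable and more systematic starting point, but as written the counting at the end does not go through.
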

\begin{proof}
Let $N(r)$ be a Dehn filling of $N$ 
which is not compatible to $f$, i.e. 
the map $f$ cannot be extended to $N(r)$. 
For sufficiently long slopes $r$, 
the manifold $N(r)$ is not commensurable to any Dehn filling of $M$ 
by Lemma~\ref{lem:cover}. 
There are infinitely many such Dehn fillings. 
\end{proof}

We finally show that 
$\mathcal{C}$ and $\mathcal{C}_{\mathrm{orb}}$ have different topologies 
as stated in Remark~\ref{rem:orb}. 
Recall that 
$\mathcal{H}_{\mathrm{orb}}$ is the space of finite volume hyperbolic 3-orbifolds, 
and $\mathcal{C}_{\mathrm{orb}}$ is the quotient space of $\mathcal{H}_{\mathrm{orb}}$ 
up to commensurability. 

\begin{prop}
\label{prop:orb}
There is a convergent sequence in $\mathcal{C}_{\mathrm{orb}}$ 
which is divergent with respect to the topology of $\mathcal{C}$. 
\end{prop}
\begin{proof}
Let $M$ be a finite volume hyperbolic 3-manifold with a cusp. 
Fix a slope in a cusp of $M$. 
For a positive integer $n$, 
let $M_{n}$ denote the Dehn filling of $M$ along $n$ times of the slope, 
i.e. $M_{n}$ is a topologically homeomorphic to the Dehn filling $M_{1}$, 
and has a singular locus of order $n$ at the filled core. 
Then the orbifold version of hyperbolic Dehn surgery theorem states that 
$M_{n}$ is hyperbolic for sufficiently large $n$, 
and the sequence $\{ M_{n} \}$ converges to $M$ in $\mathcal{H}_{\mathrm{orb}}$. 
Hence $\{ \pi (M_{n}) \}$ converges to $\pi (M)$ in $\mathcal{C}_{\mathrm{orb}}$.

For sufficiently large $n$, the orbifold $M_{n}$ is non-arithmetic. 
Consider a manifold $N_{n} \in \mathcal{H}$ commensurable to $M_{n}$. 
By Theorem~\ref{thm:nonarithcomm}, 
there is $\hat{M}_{n} \in \mathcal{H}_{\mathrm{orb}}$ 
such that $M_{n}$ and $N_{n}$ are finite coverings of $\hat{M}_{n}$. 
Then $\vol (\hat{M}_{n}) \geq v_{0}$, 
where $v_{0} > 0$ is the smallest volume of a hyperbolic 3-orbifold 
as referred in the proof of Lemma~\ref{lem:cover}. 
Since the orbifold $\hat{M}_{n}$ is the image of a covering from $M_{n}$, 
it has a singular locus of order at least $n$. 
Hence the manifold covering $N_{n} \to \hat{M}_{n}$ has degree at least $n$. 
Then we obtain $\vol (N_{n}) \geq n \cdot \vol (\hat{M}_{n}) \geq n v_{0}$. 
Therefore the sequence $\{ \vol (N_{n}) \}$ is divergent.

We show that 
the sequence $\{ \pi (M_{n}) \} = \{ \pi (N_{n}) \}$ 
does not converge to $C = \pi (M)$ with respect to the topology of $\mathcal{C}$. 
Let $A = \bigcup_{n \geq n_{0}} \pi^{-1}(\pi (N_{n})) \subset \mathcal{H}$ for sufficiently large $n_{0}$. 
Then $C$ is disjoint from $\pi (A)$. 
As shown in the proof of Proposition~\ref{prop:t1}, 
$\pi^{-1}(\pi (N_{n}))$ is discrete for each $n$. 
Since $\{ \vol (N_{n}) \}$ is divergent for any choice of $N_{n}$, 
the subset $A$ is discrete. 
It is sufficient to construct a neighborhood $\pi (U)$ of $C$ in $\mathcal{C}$ 
disjoint from $\pi (A)$, 
where $\pi (U) = \pi (U(\{ \epsilon_{i,j} \}))$ was given in the proof of Theorem~\ref{thm:0dim}. 
Let $\pi^{-1}(C) = \{ M_{0,j} \} \subset \mathcal{H}$. 
At first, we can choose $\epsilon_{0,j} > 0$ 
such that $ U_{0,j} = B(M_{0,j}; \epsilon_{0,j})$ is disjoint from $A$, 
since $A$ is discrete. 
Hence $U_{0} = \bigcup_{j} U_{0,j}$ 
and $\pi^{-1}(\pi(U_{0})) = \{ M_{0,j} \}$ are disjoint from $A$. 
Inductively, 
we can take $\epsilon_{i,j} > 0$ 
such that $U_{i,j} = B(M_{i,j}; \epsilon_{i,j})$ is disjoint from $A$, 
where $U_{i-1} = U_{i-2} \cup \bigcup_{j} U_{i-1,j}$ and $\pi^{-1}(\pi(U_{i-1})) = \{ M_{i,j} \}$. 
Then $U = \bigcup_{i} U_{i}$ is disjoint from $A$, 
and $\pi (U)$ is disjoint from $\pi (A)$. 
\end{proof}

\section*{Acknowledgements} 

The author would like to thank Tomotada Ohtsuki for helpful discussions 
in the conference ``Intelligence of Low-dimensional Topology'', 
held at Research Institute for Mathematical Sciences, 
an International Joint Usage/Research Center located in Kyoto University. 
This work was supported by 
JSPS KAKENHI Grant Numbers 15H05739, 19K14530, 
and JST CREST Grant Number JPMJCR17J4.

\bibliography{ref-qtcc}

\end{document}